\documentclass[12pt]{article}

\usepackage{amsfonts}
\usepackage{amsmath}
\usepackage{amssymb,amsthm}
\usepackage{fixltx2e}
\usepackage[margin=1.2in]{geometry}
\usepackage[english]{babel}
\usepackage[utf8]{inputenc}
\usepackage[T1]{fontenc}

\usepackage{amsmath,amssymb}

\usepackage{graphicx}
\usepackage[colorlinks=true,citecolor=black,linkcolor=black,urlcolor=blue]{hyperref}

\usepackage{mathtools}
\usepackage{mathrsfs}
\def\Z{\mathbb{Z}}
\def\N{\mathbb{N}}
\mathtoolsset{showonlyrefs}

\MakeRobust{\overrightarrow}

\setcounter{MaxMatrixCols}{10}

\newtheorem{theorem}{Theorem}

\newtheorem{remark}[theorem]{Remark}

\begin{document}

\title{Bounds on the cardinality of restricted sumsets in $\mathbb{Z}_{p}$}

\author{Gabriel Bengochea\thanks{Supported by SEP--PRODEP through project UAM-PTC-630.}\qquad Bernardo Llano\\
\small Departamento de  Matemáticas\\
\small Universidad Autónoma Metropolitana, Iztapalapa (UAM-I)\\
\small Apartado 55-534, M\'exico D.F.\\
\small \tt \{g.bengochea, llano\}@xanum.uam.mx\\
}

\maketitle

\begin{abstract}

\noindent In this paper we present a procedure which allows to
transform a subset $A$ of $\mathbb{Z}_{p}$ into a set $ A'$ such
 that $ |2\hspace{0.15cm}\widehat{} A'|\leq|2\hspace{0.15cm}\widehat{} A | $, where $2\hspace{0.15cm}\widehat{} A$ is
 defined to be the set $\left\{a+b:a\neq b,\;a,b\in A\right\}$. From this result,
 we get some lower bounds for  $ |2\hspace{0.15cm}\widehat{} A| $. Finally, we give some remarks related to the
 problem for which sets $A\subset \mathbb{Z}_{p}$ we have the equality $|2\hspace{0.15cm}\widehat{} A|=2|A|-1$.
\vspace{0.5cm}
\noindent \emph{Keywords}: restrited sumsets;
addition theorems; Erd\H{o}s-Heilbronn problem

\noindent  \emph{Math. Subj. Class.}: 11P99, 11B2
\end{abstract}

\section{Introduction}
In Additive Number Theory, two types of problem are studied: the so
called direct problems and the inverse ones. Let  $G$ be an Abelian
group and a nonempty finite subset $A\subset G$. The problem of
determining properties of a sumset $$2A=A+A=\left\{ a+b:a,b\in
A\right\}$$ from properties of $A$ is called a \textit{direct
problem}. \textit{Inverse problems} consist of determining
properties of set $A$ from properties of sumset $2A$. These problems
are also studied for the so-called restricted sumsets
$$2\hspace{0.15cm}\widehat{}A = \left\{
a+b:a\neq b,\;a,b\in A\right\}$$

In 1963, the Erd\H{o}s-Heilbronn conjecture \cite{Erdos} was
established by Paul Erd\H{o}s in a conference at the University of
Colorado. The conjecture states that
\begin{equation}
\left\vert 2\hspace{0.15cm}\widehat{}A\right\vert  \geq \min
\{p,2k-3\},
\end{equation}%
where $A\subset \mathbb{Z}_{p}$ and $p$ is a prime number.

Several mathematicians obtained partial results of the conjecture,
for example Rickert \cite{Rick}, Mansfield \cite{Mans}, R\"{o}dseth
\cite{Rod} and Freiman, Low and Pitman \cite{Freiman}. After more
than 30 years, Dias da Silva and Hamidoune \cite{Dias} published a
proof of the conjecture. They showed the more general inequality
\begin{equation}
\left\vert h\hspace{0.15cm}\widehat{}A\right\vert =\left\vert
\{a_{1}+a_{2}+...+a_{h}:a_{i}\in A,\;a_{i}\neq a_{j},\;i\neq
j\}\right\vert \geq \min \{p,h\ |A|-h^{2}+1\},  \nonumber
\end{equation}
where $h\geq 2$. The proof is based on the representation theory of
symmetric groups and advance linear algebra. A bit after, Alon,
Nathanson and Ruzsa \cite{alon1995adding,alon1996polynomial}  gave a
shorter proof using the so-called polynomial method. In 2005,
K\'{a}rolyi \cite{Karolyi} proved that the equality in the
conjecture holds for $h=2$ when the set $A$ ($\left\vert
A\right\vert \geq 5$) is an arithmetic progression even for Abelian
groups. These are typical examples of inverse problems for
restricted sums. In particular, it is worth stating the main result
of the paper mentioned before.

\begin{theorem}\label{Kar}
Let $A\subset \mathbb{Z}_{p}$ such that $p$ is a prime number, $
p>2\left\vert A\right\vert -3$ and $\left\vert A\right\vert \geq 5.$
Then $\left\vert 2\hspace{0.15cm}\widehat{}A\right\vert =2\left\vert
A\right\vert-3$ if and only if $A$ is an arithmetic progression.
\end{theorem}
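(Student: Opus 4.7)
The ``if'' direction is a direct computation: for $A = \{a + jd : 0 \leq j \leq k-1\} \subset \mathbb{Z}_p$ with $d \neq 0$ and $k = |A|$, the restricted sumset $2\hspace{0.15cm}\widehat{} A$ equals the arithmetic progression $\{2a + id : 1 \leq i \leq 2k-3\}$, whose $2k-3$ entries are distinct in $\mathbb{Z}_p$ because $p > 2k-3$. The content of the theorem is the converse, and my plan is to reduce the restricted sumset question to the classical Freiman inverse problem for the ordinary sumset, then analyze the minimal containing arithmetic progression in detail.

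Suppose $|2\hspace{0.15cm}\widehat{} A| = 2k-3$. Since $p$ is odd, the doubling map $a \mapsto 2a$ is a bijection on $\mathbb{Z}_p$, and from the decomposition $A+A = 2\hspace{0.15cm}\widehat{} A \cup \{2a : a \in A\}$ I obtain the two-sided bound $2k-1 \leq |A+A| \leq 3k-3$, the lower estimate coming from Cauchy--Davenport (valid since $2k-1 \leq p$). Next I would invoke a Freiman-type inverse theorem in $\mathbb{Z}_p$ (in the form of Hamidoune--R\"odseth): under $|A| \leq (p+3)/2$, which follows from $p > 2k-3$, the hypothesis $|A+A| \leq 3k-4$ forces $A$ into an arithmetic progression $P$ of length at most $|A+A| - k + 1 \leq 2k-3$; the edge case $|A+A| = 3k-3$ is handled by the sharpening of the same theorem, yielding $|P| \leq 2k-2$.

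After an affine change of coordinates (admissible because $2\hspace{0.15cm}\widehat{}$ is equivariant under affine maps and any common difference of $P$ is coprime to $p$), I may assume $P = \{0, 1, \ldots, L-1\}$ with $0, L-1 \in A$ and $k \leq L \leq 2k-2$. I then observe the inclusion
\[T := (A \setminus \{0\}) \,\cup\, \bigl((A \setminus \{L-1\}) + (L-1)\bigr) \,\subseteq\, 2\hspace{0.15cm}\widehat{} A,\]
where the two parts share only the element $L-1 = 0 + (L-1)$, so that $|T| = 2(k-1) - 1 = 2k-3$. Combined with the hypothesis $|2\hspace{0.15cm}\widehat{} A| = 2k-3$, this forces $T = 2\hspace{0.15cm}\widehat{} A$; consequently every internal sum $a+b$ with $a,b \in A \setminus \{0, L-1\}$ and $a \neq b$ must lie in $A \cup (A + L-1)$. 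If $L = k$, then $A = P$ and we are done. Otherwise $L > k$, and, choosing a hole $h \in P \setminus A$, I would use this structural constraint to exhibit an internal sum $a + b$ outside $A \cup (A + L - 1)$, contradicting $T = 2\hspace{0.15cm}\widehat{} A$.

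The main obstacle is this last step: the combinatorial case analysis that, starting from the equality $T = 2\hspace{0.15cm}\widehat{} A$ and the presence of a hole in $A$ relative to $P$, produces an explicit forbidden sum. Both quantitative hypotheses enter here in an essential way: $|A| \geq 5$ excludes degenerate small configurations in which internal sums coincide accidentally with boundary sums, while $p > 2k-3$ guarantees that the arithmetic inside the containing progression takes place in a portion of $\mathbb{Z}_p$ free of wrap-around collisions, so that the combinatorics effectively reduces to the integer setting.
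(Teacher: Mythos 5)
There is no proof of this statement in the paper to compare against: Theorem \ref{Kar} is quoted as K\'arolyi's inverse theorem, and the proofs in the cited sources \cite{Karolyi} and \cite{MR3082208} proceed by the exterior algebra/polynomial method, not by rectification. So your route is in any case different from the one behind the theorem; the question is whether it works, and as written it does not.

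The decisive problem is that the step you yourself label ``the main obstacle'' --- showing that $T=2\hspace{0.15cm}\widehat{}A$ together with a hole $h\in P\setminus A$ forces a sum outside $A\cup(A+L-1)$ --- is precisely the content of the converse and is not carried out; what you do establish (the ``if'' direction, the inclusion $T\subseteq 2\hspace{0.15cm}\widehat{}A$, the count $|T|=2k-3$) is the easy part. Beyond that, the reduction itself has two gaps. First, the Freiman-type inverse theorems you invoke in $\mathbb{Z}_p$ (Hamidoune--R\"odseth and its relatives) require $|A+A|$ to be small compared with $p$, typically $|A+A|\le p-4$ or so, not merely $|A|\le(p+3)/2$. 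Under the stated hypothesis $p>2|A|-3$ the prime can be as small as $2k-1$ while your upper bound on $|A+A|$ is $3k-3$, which exceeds $p-4$ for all relevant $k$; in that range no rectification theorem applies and the reduction to the integer setting collapses (wrap-around also threatens your claim that the two pieces of $T$ meet only in $L-1$, since $2L-2$ can exceed $p$ when $L$ is close to $2k-2$). Second, even for large $p$ the bound $|A+A|\le 3k-3$ falls outside the $3k-4$ theorem, and the ``$3k-3$'' structure theorems do not simply return a short containing progression: they allow additional conclusions (bi-arithmetic progressions, or the alternative that $A+A$ contains a long progression) that your sketch does not eliminate. The proposal is therefore a plan whose essential step is missing and whose preliminary reduction is invalid for small $p$.
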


More recently, K\'{a}rolyi and Paulin \cite{MR3082208} present a
shorter proof of the result stated by Dias da Silva and Hamidoune
for the case $h=2$ using the same techniques. Moreover, they use
this method to obtain the above theorem.

We notice that as a consequence of Theorem \ref{Kar}, one can
conclude that $2\hspace{0.15cm}\widehat{}A$ is also an arithmetic
progression. In this paper, we go further and consider sets
$A\subset \mathbb{Z}_{p}$ not in arithmetic progression and ask for
bounds of $\left\vert 2\hspace{0.15cm}\widehat{}A\right\vert $. For
this purpose, let us define $A=[0,l-1]\cup B$ written in its normal
form, where $B\subset \mathbb{Z}_{p}$ is arbitrary set. In this
case, the restricted sumset $2\hspace{0.15cm}\widehat{}A$ can be
expressed as
\begin{equation}
2\hspace{0.15cm}\widehat{}A=2\;\widehat{}\;[0,l-1]\cup \left(
\lbrack 0,l-1]+B\right) \cup 2\hspace{0.15cm}\widehat{}\,B.
\nonumber
\end{equation}%
The main goal of this paper is to find lower bounds of
$|2\hspace{0.15cm} \widehat{}A|$ in the cases when
\begin{itemize}
\item[i)] $2\;\widehat{}\;[0,l-1]\cup \left( B+[0,l-1]\right) $ is an interval,

\item[ii)] $2\hspace{0.15cm}\widehat{}A$ is an interval and

\item[iii)] $2\;\widehat{}\;[0,l-1]\cup 2\hspace{0.15cm}\widehat{}\,B$ is an
interval.
\end{itemize}

The paper is organized as follows. Section \ref{S2} contains some
preliminary results. In Section \ref{S3}, we prove the already
mentioned lower bounds for $|2\hspace{0.15cm}\widehat{}A|$. As
consequences of the theorems established, we can conclude that in
Cases i) and ii), we have that $\left\vert
2\hspace{0.15cm}\widehat{}A\right\vert \geq 2\left\vert A\right\vert
-1.$ In Case iii), the same inequality is valid for some special
instances of Theorem \ref{t3}.

\section{Preliminaries} \label{S2}
We define $\Z_{p}$ as the additive group of congruence classes
modulo $p$, where $p$ is a prime number.  We define a total order
$\leq$ in $\Z_p$ as follows: for $\overline{a_1},
\overline{a_2}\in\Z_p$, $\overline{a_1}\leq \overline{a_2}\,$ if and
only if $a_1\leq a_2$ for $a_1,a_2\in\{0,1,2,...,p-1\}.$  We will
omit the over line of the elements of $\Z_p$ when there is no
confusion. We define the function $w:\Z_p\times\Z_p\rightarrow\N_0$
as
$$w({a_1},{a_2}) = \left\{
  \begin{array}{cl}
     \left |a_1-a_2\right |-1,& \qquad{a_1}\ne{a_2}\\
    0, & \qquad {{a_1}={a_2}}
  \end{array}. \right.$$
The function $w$ measure the width between two elements of $ \Z_p $.
For instance, $w({0},{p})=0$, $w({2},{3})=0$, $w({0},{p-1})=p-2$.
For $A\subseteq\Z_p$, let us define
\begin{equation}\notag
    \min(A):=\left\{a\in A:a\leq b,\, \forall\, b\in A\right\},
\end{equation}
\begin{equation}\notag
    \max(A):=\left\{a\in A:a\geq b,\, \forall\, b\in A\right\},
\end{equation}
and
\begin{equation}\notag
    \min(\emptyset)=\max(\emptyset)=\emptyset.
\end{equation}
 For $a_1,a_2\in\Z_p$, with $a_1\leq a_2$, we define the integer interval as
 \begin{equation}\notag
[a_1,a_2]=\left \{ \begin{array}{lr}
  \{a_1,a_1+1,a_1+2,...,a_2\},& a_1\neq a_2\\[7pt]
  \{a_1\},& a_1=a_2
 \end{array}\right ..
 \end{equation}
Let $A$ and $B$ be nonempty subsets of $\Z_p$. We define the
(unrestricted) sumset
\begin{equation}\notag
A+B=\left\{ x\mid x=a+b,a\in A,b\in B\right\},
\end{equation}
and the restricted sumset
\begin{equation}\notag
A\;\widehat{+}\;B=\left\{ x\mid x=a+b,a\in A,b\in B,a\neq b\right\}
.
\end{equation}
We enumerate some other definitions ($t\in\Z_p$):
\begin{equation}\notag
\begin{array}{rcl}
A-B &=&A+\left( -B\right) , \\
-A &=&\left\{ -a\mid a\in A\right\} , \\
t*A &=&\left\{ ta\mid a\in A\right\} , \\
A\setminus \{t\} &=&\{a\in A\mid a\neq t\}, \\
A-t &=&\{a-t\mid a\in A\},\quad t\in\Z_p , \\
\overline{A} &=&\{a^{\prime }\in \mathbb{Z}_{p}\mid a^{\prime
}\notin A\}.
\end{array}
\end{equation}
Let $a,d\in\Z_p$, with $d\neq 0$ and $r\in\N$. The set
\begin{equation}\label{ap}
\begin{split}
\{a+id:0\leq i\leq r-1\}
\end{split}
\end{equation}
is an arithmetic progression of length $r$.
\section{Results }\label{S3}
Let $A'$ be a subset of $\Z_p$ and $\left|A'\right|=m\geq 2$. Since
any set with two elements is an arithmetic progression, then there
always exists a maximum subset of $A'$ which is an arithmetic
progression with at least two elements. We suppose that the maximum
arithmetic progression of $A'$ has $l\geq 2$ elements. It follows
that $A'$ can be written as
\begin{equation}\notag
 \begin{split}
 A'=\{a+id:i=0,1,2,...,l-1\}\cup B',
 \end{split}
 \end{equation}
 where $|B'|=k$ and $l+k=m$. We define
 \begin{equation}\label{a}
    A=d^{-1}*(A'-\{a\}).
\end{equation}
A simple computation yields that
\begin{equation}\label{b}
 \begin{split}
 A=[0,l-1]\cup B,
 \end{split}
\end{equation}
with $B=\{a_1,a_2,...,a_k\}\subset\Z_p$ an ordered set, that is, for
$i,j\in[1,k]$, if $i\leq j$, then $a_i\leq a_j$. This process is
called the normalization of the set $A'$ and the set $A$ is called
the normal form of the set $A'$. The set $ A $ always be  written in
its normal form \eqref{b}. A important property of the normal form
is that
\begin{equation}\label{c}
    |2\hspace{.15cm}\widehat{}A\;|=|2\hspace{.15cm}\widehat{}A'\;|.
\end{equation}
The normal form of \eqref{ap} is the interval $[0,r-1]$. When we
refer to an arithmetic progression we will write this as an
interval. In what follows, we suppose that $2\;\widehat{}A \neq
\Z_p$. The restricted sumset of \eqref{b} is given by
\begin{equation}\label{d}
\begin{split}
2\hspace{.15cm}\widehat{}A=2\;\widehat{}\;[0,l-1]\cup2\;\,\widehat{}\,B\cup\left
([0,l-1]\widehat{+}B\right ).
\end{split}
\end{equation}
As well, we suppose that $|B|=k\geq 1$, since if $|B|=0$, then $A$
is an arithmetic progression.
\begin{remark}
\textit{Observe that $0,1\in A$ and if we suppose that
$2\;\widehat{}\;[0,l-1]\cup\left ([0,l-1]+B\right )$ is an interval
in \eqref{d}, then  the interval $2\;\widehat{}\;[0,l-1]\cup\left
([0,l-1]+B\right )$ must have one of the next forms:
$[1,a_k+l-1],\;$  $[a_1,p]\cup[1,2l-3]$ or
$[a_{s+1},p]\cup[1,a_s+l-1]$ with $a_1,a_k,a_s,a_{s+1}\in B$ and for
a fixed $s\in\{1,2,...,k\}$.}
\end{remark}
In the next theorem we suppose that $2\hspace{.15cm}\widehat{} A$ is
not an interval.
\begin{theorem}\label{T2}
Let $A \subseteq \Z_{p}$ as in \eqref{b} with $\left| A\right|
=m=l+k\geq 4$ and $A$ is not an arithmetic progression. Suppose that
$|2\hspace{.15cm}\widehat{} A\;|< p-1  $. If
$2\;\widehat{}\;[0,l-1]\cup \left( B+[0,l-1]\right) $ is an
interval, then
\begin{itemize}
\item [a)] If $2\;\widehat{}\;[0,l-1]\cup \left( B\widehat{+}[0,l-1]\right) $ is of the form $[1,a_k+l-1]$ and $a_{\min}=\min\{a\in B:l+k\leq a\}$, then
\begin{equation}\label{e}
|2\hspace{.15cm}\widehat{} A\;|\geq 2\left\vert A\right\vert-3
+w(l+k-2,a_{\min}).
\end{equation}
\item [b)] If $2\;\widehat{}\;[0,l-1]\cup \left( B\widehat{+}[0,l-1]\right) $ is of the form $[a_1,p]\cup[1,2l-3]$ and $a_{\max}=\max\{a\in B: a\leq p-k+1\}$, then
\begin{equation}\label{f}
|2\hspace{.15cm}\widehat{} A\;|\geq 2\left\vert A\right\vert-3
+w(a_{\max},p-k+1).
\end{equation}
\item [c)] If $2\;\widehat{}\;[0,l-1]\cup \left( B\widehat{+}[0,l-1]\right) $ is of the form $[a_{s+1},p-1]\cup[0,a_s+l-1]$ where $s\in [1,k-1]$ is fixed, $a_{\min}=\min\{a\in B:l+s\leq a\}$ and $a_{\max}=\max\{a\in B:a\leq p-k+s-1\}$, then
\begin{equation}\label{g}
|2\hspace{.15cm}\widehat{} A\;|\geq 2\left\vert A\right\vert
-3+w\left(l+s-2,a_{\min}\right)+w\left(a_{\max},p-k+s+1\right).
\end{equation}
\end{itemize}
\end{theorem}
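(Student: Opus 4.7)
My approach begins from the decomposition already recorded in \eqref{d}:
\[
2\hspace{.15cm}\widehat{} A \;=\; \bigl(2\;\widehat{}\;[0,l-1] \cup ([0,l-1]\widehat{+}B)\bigr) \cup 2\;\widehat{}\;B,
\]
in which, by hypothesis, the first union is a known interval $I$. The plan is to compute $|I|$ directly from its stated form and then exhibit explicit elements of $2\;\widehat{}\;B$ that lie outside $I$; the assumption $|2\hspace{.15cm}\widehat{} A|<p-1$ is what will prevent these would-be new elements from being reabsorbed into $I$ via modular wrap-around.

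Before attacking the three cases I would record two structural facts. First, maximality of $[0,l-1]$ as an arithmetic progression inside $A$ forces $B\cap[0,l]=\emptyset$, so every $a_i\in B$ satisfies $a_i\geq l+1$. Second, since $|B|=k$ and $B\subseteq[l+1,p-1]$, a pigeonhole argument shows that at least one element of $B$ is $\geq l+k$, so $a_{\min}$ is well defined in case (a); analogous reasoning justifies $a_{\max}$ in (b) and both quantities in (c).

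For case (a), $|I|=a_k+l-1$. I consider the $k-1$ pairwise distinct sums $a_i+a_k$ with $i<k$; since $a_i\geq l+1$ each strictly exceeds $a_k+l-1$ as an integer, so, barring modular wrap-around, each contributes a new element of $2\hspace{.15cm}\widehat{} A$. The definition of $a_{\min}$ together with $|2\hspace{.15cm}\widehat{} A|<p-1$ should force at most $a_k-a_{\min}$ of these sums to wrap back into $I$, leaving at least $k-1-(a_k-a_{\min})=k+a_{\min}-a_k-1$ of them in $2\hspace{.15cm}\widehat{} A\setminus I$. This yields $|2\hspace{.15cm}\widehat{} A|\geq (a_k+l-1)+(k+a_{\min}-a_k-1)=l+k+a_{\min}-2=2|A|-3+w(l+k-2,a_{\min})$, as required. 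Case (b) is the mirror image under the involution $A\mapsto -A$, which preserves the cardinality of restricted sumsets and interchanges the roles $a_1\leftrightarrow a_k$ and $a_{\min}\leftrightarrow a_{\max}$; it therefore follows either by direct reduction to (a) or by a symmetric calculation using the sums $a_1+a_j$ for $j>1$.

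Case (c) combines both ends of $B$ since $I$ now has two arcs. I would split the analysis at the distinguished index $s$: the sums $a_i+a_k$ with $a_i\geq a_{\min}$ should supply the term $w(l+s-2,a_{\min})$, while the sums $a_j+a_1$ with $a_j\leq a_{\max}$ should supply the term $w(a_{\max},p-k+s+1)$; these two families lie on opposite sides of $I$, so provided $|2\hspace{.15cm}\widehat{} A|<p-1$ they contribute disjoint new elements. The main obstacle throughout is this wrap-around control: converting the global hypothesis $|2\hspace{.15cm}\widehat{} A|<p-1$ into the explicit count of non-wrapping sums dictated by $a_{\min}$ and $a_{\max}$ is the technical heart of the proof, and in case (c) one must additionally rule out collisions between the two families under wraparound.
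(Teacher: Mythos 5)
Your overall strategy---compute the length of the interval $I=2\;\widehat{}\;[0,l-1]\cup\left(B\widehat{+}[0,l-1]\right)$ and then exhibit enough elements of $2\;\widehat{}\;B$ outside $I$---is genuinely different from the paper's proof, which instead runs a compression procedure $A\mapsto A_1\mapsto\cdots\mapsto A_j$ (delete the largest element of $B$, lengthen the initial interval by one, observe $2\hspace{.15cm}\widehat{}A_{j+1}\subseteq 2\hspace{.15cm}\widehat{}A_{j}$) until one of three explicit terminal configurations is reached and computed exactly. Your preliminary observations are correct: $B\subseteq[l+1,p-1]$ by maximality of the progression, $a_{\min}$ exists by pigeonhole, $|I|=a_k+l-1$, case (b) reduces to case (a) via $A\mapsto -A+\{l-1\}$, and the arithmetic is right---in case (a) one needs at least $(k-1)-(a_k-a_{\min})$ elements of $2\;\widehat{}\;B$ outside $I$, and if none of the $k-1$ distinct sums $a_i+a_k$ ($i<k$) wraps modulo $p$, all of them serve.

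However, the step you yourself flag as the ``technical heart''---that at most $a_k-a_{\min}$ of the sums $a_i+a_k$ can land back in $I$ after reduction modulo $p$---is asserted, not proved, and it does not follow from any direct count. A sum $a_i+a_k$ lands in $I=[1,a_k+l-1]$ precisely when $a_i\geq p+1-a_k$, so the number of wrapping sums is $\left|B\cap[p+1-a_k,\,a_k-1]\right|$. Bounding this by the length of that interval gives $2a_k-p-1$, which is at most $a_k-a_{\min}$ only when $a_k+a_{\min}\leq p+1$; that inequality is not among the hypotheses, and when $a_k\geq p-l-k+2$ the window $[p+1-a_k,a_k-1]$ can also capture elements of $B$ lying \emph{below} $a_{\min}$ (i.e.\ in $[l+1,l+k-1]$), which your budget of $a_k-a_{\min}$ does not cover. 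Closing the gap appears to require showing that any such heavily wrapping configuration forces $|2\hspace{.15cm}\widehat{}A|\geq p-1$, contradicting the hypothesis, or else invoking sums other than $a_i+a_k$; in the examples I tested where your count fails it is indeed the former mechanism that saves the statement, but that implication is precisely what must be proved and appears nowhere in your argument. The same unestablished wrap-around control is invoked twice in case (c), where you must additionally rule out collisions between the two families of sums and handle the interactions between the two arcs of $I$---this is exactly where the paper expends nine sub-cases. As written, the proposal proves the theorem only in the regime where no sum $a_i+a_k$ exceeds $p$.
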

\begin{proof}
\begin{itemize}
    \item [\textit{a)}] Let $A=[0,l-1]\cup B$. Since $2\;\widehat{}\;[0,l-1]\cup \left( B\widehat{+}[0,l-1]\right)=[1,a_{k}+l-1]$, we can rewrite \eqref{d} as
    \begin{equation}\label{h}
    2\hspace{.15cm}\widehat{} A\;=[1,a_{k}+l-1]\cup 2\;\widehat{}\;B.
    \end{equation}
    Since the set $A$ is not an arithmetic progression, we have that $ \{a\in B:l+k\leq a\}\neq \emptyset $, so there exists $ a_{\min}=\min\{a\in B:l+k\leq a\} $. Define $A_1=[0,l]\cup B_1,$ where $B_1=B\backslash\{a_k\}$. It is easy to see that $|A|=|A_1|$ and
    $$2\hspace{.15cm}\widehat{}A_1=2\;\widehat{}\;[0,l]\cup\left ([0,l]\widehat{+} B_1\right )\cup 2\;\widehat{\phantom{.}}B_1.$$
    Observe that $2\;\widehat{}\;[0,l]\cup\left ([0,l]\widehat{+} B_1\right )  $ remains an interval.
From the fact that $a_{k-1}+l\leq a_k+l-1$ we obtain that
    \begin{equation}\label{A_11}
    2\;\widehat{}\;[0,l]\cup \left ([0,l]\widehat{+} B_1\right )\subset 2\;\widehat{}\;[0,l-1]\cup \left([0,l-1]\widehat{+} B\right).
    \end{equation}
    On the other hand, since $B_1\subset B$ it follows that
    \begin{equation}\label{A_13}
    2\,\,\widehat{}\;B_1\subseteq 2\,\,\widehat{}\;B.
    \end{equation}
    From \eqref{A_11} and \eqref{A_13} we conclude
    \begin{equation}\label{ii}
    2\hspace{.15cm}\widehat{}A_1\subseteq 2\hspace{.15cm}\widehat{}A,
    \end{equation}
    and
    \begin{equation}\label{j}
    \left|2\hspace{.15cm}\widehat{}A_1\right|\leq\left|2\hspace{.15cm}\widehat{}A\right|.
    \end{equation}
    Now, if $ A_1 =[0,l+k-2]\cup \{a_{\min}\}$ or $ A_1=[0,l+q-1]\cup \lbrack l+q+1,a_{\min}] $  for some $ q\in[1,k-2] $, then the procedure stops. Otherwise, we recursively go on with the procedure until we obtain on of the following cases:
\begin{enumerate}
\item[i)] $A_j=[0,l+k-2]\cup \{a_{\min}\}$ for some $ j\in [2,k-1], $
\item[ii)] $A_j=[0,l+q-1]\cup \lbrack l+q+1,a_{\min}]$, for some $ j\in[2,k-2] $ and $q\in[1,k-2]$,
\item[iii)] $A_j=[0,l+k-3]\cup\{l+k-1,a_{\min}\} $ for some $ j\in[2,k-2] $.
\end{enumerate}

\begin{remark}\label{remark2}{
After applying the procedure, we get that $
2\hspace{.15cm}\widehat{}A_{j+1}\subseteq
2\hspace{.15cm}\widehat{}A_{j}$, with  $ j\in[1,k-3] $ in the cases
ii) and iii) and $ j\in[1,k-2] $ in the case i). Moreover, $
2\hspace{.15cm}\widehat{}A_{j}\subseteq  2\hspace{.15cm}\widehat{} A
$ (and consequently, $ |2\hspace{.15cm}\widehat{}A_{j}|\leq
|2\hspace{.15cm}\widehat{} A| $  ) for every possible $ j\in[1,k-2]
$. }
\end{remark}
Accordingly, we consider the following cases:
\begin{enumerate}
        \item If $A_j=[0,l+k-2]\cup \{a_{\min}\}$, for some $ j\in[1,k-1] $, then
        \begin{equation}\notag
        \begin{split}
        2\hspace{.15cm}\widehat{}A_j&=[1,a_{\min}+l+k-2]\\
        &=[1,2l+2k-3]\cup[2l+2k-2,a_{\min}+l+k-2]
        \end{split}
        \end{equation}
        Observe that $ a_{\min}=l+k-1+w(l+k-2,a_{\min}) $. Hence
$$\left\vert 2\hspace{.15cm}\widehat{} A_j\right\vert =2|A|-3+w(l+k-2,a_{\min}). $$
From this result and Remark \ref{r2} we conclude that
        \begin{equation}\notag
        \left\vert 2\hspace{.15cm}\widehat{} A\right\vert \geq2|A|-3+w(l+k-2,a_{\min}).
        \end{equation}
        \item If  $A_j=[0,l+q-1]\cup \lbrack l+q+1,a_{\min}]$, for some $ j\in[2,k-2] $ and $q\in[1,k-2]$, then
        \begin{equation}\notag
        2\hspace{.15cm}\widehat{} A_j=[1,2a_{\min}-1].
        \end{equation}
        It is not difficult to see that $ a_{\min}=l+k $. Then
        \begin{equation}
        \begin{split}
        \left\vert 2\hspace{.15cm}\widehat{} A_j\right\vert&=2(l+k)-1\\
        &\geq 2\left|A\right|-2\\
        &=2\left|A\right|-3+w(l+k-2,a_{\min})\\
        \end{split}
        \end{equation}
        From this result and  Remark \ref{remark2} we conclude
        \begin{equation}\notag
        \left\vert 2\hspace{.15cm}\widehat{} A\right\vert \geq 2|A|-3+w(l+k-2,a_{\min}).
        \end{equation}
        \item If $A_j=[0,l+k-3]\cup\{l+k-1,a_{\min}\} $ (with $ l+k\leq a_{\min} $) for some $ j\in[2,k-2] $, then $ a_{\min}=l+k-1+w(l+k-2,a_{\min})$ and
\begin{equation}
\begin{split}
2\hspace{.15cm}\widehat{} A_j&=[1,l+k-3+a_{\min}]\cup \{l+k-1+a_{\min}\}\\
&= [1,2(l+k)-4+w(l+k-2,a_{\min})] \cup \{l+k-1+a_{\min}\}.
\end{split}
\end{equation}
Now, observe that $  l+k-1+a_{\min}\notin
[1,2l+2k-4+w(l+k-2,a_{\min})] $ since if $p+1\leq l+k-1+a_{\min}$,
then $p-1\leq l+k-3+a_{\min}  $ which is impossible ($
|2\hspace{.15cm}\widehat{} A|< p-1 $). From the above and  Remark
\ref{remark2}, we conclude that
\begin{equation}\notag
\left\vert 2\hspace{.1cm}\widehat{} A\right\vert\geq \left\vert
2\hspace{.1cm}\widehat{} A_j\right\vert=2|A|-3+w(l+k-2,a_{\min}).
\end{equation}
\end{enumerate}
    \item [\textit{b)}] This case is a special case of $a)$. Observe that
    \begin{equation}\notag
    A'=-A+\{l-1\}=[0,l-1]\cup B',
    \end{equation}
    with $B'=\{p-a_k+l-1,p-a_{k-1}+l-1,...,p-a_1+l-1\}$. Therefore, the set $A'$ satisfy the case $a)$ and $w(a_{\max},p-k+1)=w(a_{\min},l+k)$ where $a_{\max}=\max\{a\in B: a\leq p-k-1\}$ and $a_{\min}=\min\{a\in B': l+k\leq a\}$. Hence
    \begin{equation}\notag
    \left\vert 2\hspace{.1cm}\widehat{} A\right\vert \geq 2|A|-3+w(a_{\max},p-k+1).
    \end{equation}
    \item[\textit{c)}] Let $ A=[0,l-1]\cup B $, where $B=B_1\cup B_2$ with $B_1=\{a_1,a_2,...,a_s\}$, $B_2=\{a_{s+1},a_{s+2},...,a_{k}\}$, and $ 2\leq s\leq k-1 $. Recall that $a_{\min}=\min\{a\in B_1:l+s\leq a\}$ and $a_{\max}=\max\{a\in B_2:a\leq p-k+s-1\}$. Define $ \Lambda^{(i)}=[0,l-1]\cup B_i $, $ (i=1,2) $. We apply the case $ a) $ to $ \Lambda^{(1)}$ and obtain $ \Lambda^{(1)}_{j_1}$ for some $ j_1 $. Let $A^{(1)}_{j_1} =\Lambda^{(1)}_{j_1}  $. We apply the case $ b) $ to $ \Lambda^{(2)}  $ and obtain $ \Lambda^{(2)}_{j_2}  $ for some $ j_2 $. Let $ A^{(2)}_{j_2} =-\Lambda^{(2)}_{j_2}+l-1 $. Finally, we set
\begin{equation}
A'=A^{(1)}_{j_1}\cup A^{(2)}_{j_2}=[c,d]\cup B_1'\cup B_2',
\end{equation}
where $ c\leq p-1 $, $ d\geq l+1,  $  $ B_1' $ is one of the
following sets $ \{a_{\min}\} $, $ [d+2,a_{\min}]$ or $
\{d+2,a_{\min}\} $, and $ B_2' $ is one of the following sets $
\{a_{\max}\} $, $  [a_{\max}, c-2] $ or $ \{a_{\max},c-2\} $.
Therefore, we should consider nine possible cases for $ A' $. Note
that $ 2\;\widehat{}\; [c,d]\cup ([c,d]+ (B_1'\cup B_2')) $ is still
an interval by construction and the Remark \ref{remark2} is still
valid.

    \begin{enumerate}
        \item[i)] If $A'=\{a_{\max}\}\cup[p-k+s+1,p-1]\cup[0,l+s-2]\cup\{a_{\min}\}$, then we define $A''=A'+\{k-s-1\}=\left\{a_{\max}+k-s-1\right\}\cup[0,l+k-3]\cup\left\{a_{\min}+k-s-1\right\}$. It is clear that
        \begin{equation}\label{r1}
        |2\hspace{.15cm}\widehat{}A'|=|2\hspace{.15cm}\widehat{}A''|,
        \end{equation}
 and
        \begin{equation}\notag
    \begin{split}
    2\hspace{.15cm}\widehat{}A''=[a_{\max}+k-s-1,p-1]&\cup[0,a_{\min}+2k+l-s-4]\cup\\
& \{a_{\min}+a_{\max}+2k-2s-2\}.
\end{split}
        \end{equation}
        In this case,
\begin{equation}
a_{\min}=l+s-1+w(l+s-2,a_{\min}),
\end{equation}
and
\begin{equation}
a_{\max}=p-k+s-w(a_{\max},p-k+s+1).
\end{equation} A simple computation yields
        \begin{equation}\label{p1}
        \begin{split}
        \left|2\hspace{.15cm}\widehat{}A''\right|   &\geq 2|A|-3+w(a_{\min},l+s-2)+w(a_{\max},p-k+s+1).
        \end{split}
        \end{equation}
        From \eqref{r1}, \eqref{p1} and Remark \ref{remark2}, we deduce that
        \begin{equation}\notag
        |2\hspace{.15cm}\widehat{}A|\geq 2|A|-3+w(l+s-2,a_{\min})+w(a_{\max},p-k+s+1).
        \end{equation}
        \item[ii)] If $A'=\{a_{\max}\}\cup[p-k+s+1,p-1]\cup[0,l+q-1]\cup[l+q+1,a_{\min}]$ for some $q\in[1,s-2]$, then we define \begin{equation}
\begin{split}
A''&=A'+\left\{k-s-1\right\}\\
&=\left\{a_{\max}+k-s-1\right\}\cup [0,l+i+k-s-2]\cup\\
&\phantom{fffffffffffffffffffffff} [l+i+k-s,a_{\min}+k-s-1].
\end{split}
\end{equation} We recall that
        \begin{equation}\label{r2}
        |2\hspace{.15cm}\widehat{}A'|=|2\hspace{.15cm}\widehat{}A''|.
        \end{equation}
        In this case,
\begin{equation}
a_{\min}=l+s,
\end{equation}
so $w(l+s-2,a_{\min})=1$, and
\begin{equation}
 a_{\max}=p-k+s-w(a_{\max},p-k+s+1).
\end{equation}  Hence
        \begin{equation}\notag
        2\hspace{.15cm}\widehat{}A''=[a_{\max}+k-s-1,p-1]\cup[0,2l+2k-3],
        \end{equation}
        and
        \begin{equation}\label{p2}
        \begin{split}
        |2\hspace{.15cm}\widehat{}A''|&=p-(a_{\max}+k-s-1)+2l+2k-2  \\
        &=p-k+s-a_{\max}+2l+2k-1\\
        &\geq 2|A|-3+w(l+s-2,a_{\min})+w(a_{\max},p-k+s+1).
        \end{split}
        \end{equation}
        From \eqref{r2}, \eqref{p2} and Remark \ref{remark2}, we get that
        \begin{equation}\notag
        |2\hspace{.15cm}\widehat{}A|\geq 2|A|-3+w(a_{\max},p-k+s+1)+w(l+s-2,a_{\min}).
        \end{equation}
        \item[iii)] If $ A'=[a_{\max},p-q-2]\cup [p-q,p-1]\cup[0,l+s-2]\cup\{a_{\min}\} $ for some $ q\in[1,k-s-2] $, then we apply Case ii) to the set $ -A'+\{l+s-2\} $ and we get the result.
        \item[iv)] If $A'=\{a_{\max}\}\cup[p-k+s+1,p-1]\cup[0,l+s-3]\cup\{l+s-1,a_{\min}\}$, then we define $ A''=A'+\{k-s-1\}=\{a_{\max}+k-s-1\}\cup[0,k+l-4]\cup\{l+k-2,a_{\min}+k-s-1\} $. Observe that
\begin{equation}
a_{\max}=p-k+s-w(a_{\max},p-k+s+1),
\end{equation}
and
\begin{equation}
a_{\min}=l+s-1+w(l+s-2,a_{\min}).
\end{equation}
Hence
\begin{equation}
\begin{split}
2\hspace{.15cm}\widehat{}A''&=[a_{\max}+k-s-1,p]\cup [1,a_{\min}+2k+l-s-5]\cup \\
&\{a_{\max}+2k-s-3,a_{\min}+2k+l-s-3,a_{\max}+a_{\min}+2k-2s-2\}.
\end{split}
\end{equation}
By hypothesis, $ |2\hspace{.15cm}\widehat{}A|\leq p-2 $ and so
$$ a_{\min}+2k+l-s-5\leq a_{\max}+k-s-4, $$
and
$$ a_{\min}+2k+l-s-3< a_{\max}+k-s-1. $$

Therefore
\begin{equation}
\begin{split}
|2\hspace{.15cm}\widehat{}A|\geq |2\hspace{.15cm}\widehat{}A''|&\geq\left |[a_{\max}+k-s-1,p-1]\right |+\left |[0,a_{\min}+2k+l-s-5]\right |+1\\
&=2|A|-3+w(l+s-2,a_{\min})+w(a_{\max},p-k+s+1).
\end{split}
\end{equation}
\item[v)] If $A'=\{a_{\max},p-k+s\}\cup[p-k+s+2,p-1]\cup[0,l+s-2]\cup\{a_{\min}\}$, then we apply Case iv) to the set $ -A'+\{l+s-2\} $ and we obtain the result.
        \item [vi)] If $A'=[a_{\max},p-q_2-2]\cup[p-q_2,p-1]\cup[0,l+q_1-1]\cup[l+q_1+1,a_{\min}]$ for some $ q_1\in[1,s-2] $ and $ q_2\in[1,k-s-2] $, then we define
\begin{equation}
\begin{split}
A''&=A'+\left\{p-a_{\max}\right\}\\
&=[0,p-a_{\max}-j-2]\cup [p-a_{\max}-j,p-a_{\max}]\cup\\
& \phantom{ffffffffffffff}[p-a_{\max}+1,a_j+p+l+i-a_{\max}-1]\cup\\
&
\phantom{ffffffffffffffffff}[p-a_{\max}+l+i+1,p-a_{\max}+a_{\min}].
\end{split}
\end{equation}
In this case, $a_{\min}=l+s$ and $a_{\max}=p-k+s-1$, so
$w(l+s-2,a_{\min})=1$ and $w(a_{\max},p-k+s+1)=1$. On the other
hand, we have
        \begin{equation}\label{r4}
        |2\hspace{.15cm}\widehat{}A'|=|2\hspace{.15cm}\widehat{}A''|.
        \end{equation}
        Then
        \begin{equation}\notag
        2\hspace{.15cm}\widehat{}A''=[1,2p-2a_{\max}+2a_{\min}-1].
        \end{equation}
        A simple computation yields
        \begin{equation}\label{p4}
        \begin{split}
        |2\hspace{.15cm}\widehat{}A''|&=2p-2(p-k+s-1)+2(l+s)-1\\
        &=2|A|-3+4\\
        &\geq 2|A|-3+w(l+s-2,a_{\min})+w(a_{\max},p-k+s+1).
        \end{split}
        \end{equation}
        From \eqref{r4}, \eqref{p4} and Remark \ref{remark2}, we deduce that
        $$|2\hspace{.15cm}\widehat{}A|\geq 2|A|-3+w(l+s-2,a_{\min})+w(a_{\max},p-k+s+1).$$
    \item [vii)] If $A'=[a_{\max},p-q-2]\cup[p-q,p-1]\cup[0,l+s-3]\cup\{l+s-1,a_{\min}\}$ for some $ q\in[1,k-s-2] $, then we define
\begin{equation}
\begin{split}
 A''=A'+\{p-a_{\max}\}=[0,&2p-q-a_{\max}-2]\cup\\
&[2p-q-a_{\max},2p-1-a_{\max}]\cup\\
 &[p-a_{\max},p+l+s-3-a_{\max}]\cup\\
&\{p+l+s-1-a_{\max},p+a_{\min}-a_{\max}\}.
\end{split}
\end{equation}
In this case,
\begin{equation}
a_{\max}=p-k+s-1
\end{equation}
and
\begin{equation}
a_{\min}=l+s-1+w(l+s-2,a_{\min}).
\end{equation}
 On the other hand, it is not difficult to see that
\begin{equation}
2\hspace{.15cm}\widehat{}A''=
[1,2p+a_{\min}-2a_{\max}+l+s-3]\cup\{2p+l+s-2a_{\max}+a_{\min}-1\},
\end{equation}
and
\begin{equation}\label{r5}
|2\hspace{.15cm}\widehat{}A'|\geq|2\;\widehat\;A''|.
\end{equation}
It follows that
\begin{equation}\label{p5}
        \begin{split}
        |2\hspace{.15cm}\widehat{}A''|&\geq |[1,2p+a_{\min}-2a_{\max}+l+s-3]|\\
        &= 2p-2(p-k+s-1)+\\
&\phantom{ccccccc}(l+s-1+w(l+s-2,a_{\min}))+l+s-3\\
        &=2|A|-3+w(l+s-2,a_{\min})+1\\
        &\geq 2|A|-3+w(l+s-2,a_{\min})+w(a_{\max},p-k+s+1).\\
        \end{split}
        \end{equation}
    The last inequality is true because $ w(a_{\max},p-k+s+1)=1 $. From \eqref{r5}, \eqref{p5} and Remark \ref{remark2}, we get that
        $$|2\hspace{.15cm}\widehat{}A|\geq 2|A|-3+w(l+s-2,a_{\min})+w(a_{\max},p-k+s+1).$$
\item[viii)]If $ A'=\{a_{\max},p-k+s\}\cup[p-k+2+2,p-1]\cup[0,l+q-1]\cup[l+q+1,a_{\min}] $ for some $ q\in[1,s-2] $, then we apply Case vii) to the set $ -A'+\{l+q-1\} $ and we get the result.
\item [ix)] If $A'=\{a_{\max},p-k+s\}\cup[p-k+s+2,p-1]\cup[0,l+s-3]\cup\{l+s-1,a_{\min}\}$, then
 \begin{equation}\label{min}
a_{\min}=l+s-1+w(l+s-2,a_{\min})
\end{equation}
and
\begin{equation}\label{max}
a_{\max}= p-k+s-w(a_{\max},p-k+s+1).
\end{equation}Define
\begin{equation}
\begin{split}
 A''&=A'+\{k-s-2\}\\
&=\{a_{\max}+k-s-2,p-2\}\cup[0,l+k-5] \cup\\
&\phantom{ffffffffffffffffffffff}\{l+k-3,a_{\min}+k-s-2\}.
\end{split}
\end{equation}
As in the above cases, we have that
\begin{equation}\label{r6}
|2\hspace{.15cm}\widehat{}A'|\geq|2\;\widehat\;A''|.
\end{equation}
Let $ B_1=\{a_{\max}+k-s-2,p-2\} $ and $
B_2=\{l+k-3,a_{\min}+k-s-2\} $. Accordingly
\begin{equation}
\begin{split}
2\hspace{.15cm}\widehat{}A''=[a_{\max}+k-s-2,p-1]\cup[0,a_{\min}&+2k+l-s-7]\cup\\
&2\hspace{.15cm}\widehat{}\hspace{.1cm}B_1\cup(B_1+B_2)\cup
2\hspace{.15cm}\widehat{}\hspace{.1cm}B_2
\end{split}
\end{equation}
An important property of the above equality is that
\begin{equation}\label{ww}
|2\hspace{.15cm}\widehat{}A''|\geq 2|A|-5+w(l+s-2,a_{\min})
+w(a_{\max},p-k+s+1).
\end{equation}
Observe that
\begin{equation}
a_{\min}+2k+l-s-7< a_{\max}+2k+l-s-5,
\end{equation}
and
\begin{equation}
a_{\min}+2k+l-s-5< a_{\max}+k-s-2.
\end{equation}
It follows that
$$ \; l+k-3+a_{\max}+k-s-2\notin [a_{\max}+k-s-2,p-1]\cup[0,a_{\min}+2k+l-s-7],$$
and
\begin{equation}\label{key}
 \;l+k-3+a_{\min}+k-s-2\notin [a_{\max}+k-s-2,p-1]\cup[0,a_{\min}+2k+l-s-7].
\end{equation}
\hspace{-1.8cm}The above elements are different because $ a_{\min}<
a_{ \max} $.  Therefore
\begin{equation}\label{p6bis}
\begin{split}
|2\hspace{.15cm}\widehat{}A''|\geq & 2|A|-5+w(l+s-2,a_{\min})+w(a_{\max},p-k+s+1)+2\\
&=2|A|-3+w(l+s-2,a_{\min})+w(a_{\max},p-k+s+1).
\end{split}
\end{equation}
\end{enumerate}
\end{itemize}
\end{proof}
\begin{remark}\label{remark1}
\textit{Suppose that the conditions of the previous theorem are
valid and $w(l+k-2,a_{\min})=2$ in the part $a)$. Then, we must have
that  $a_{\min}=l+k+1$. Therefore $A$ cannot be of the form
$[0,l-1]\cup[l+1,l+k]$. Otherwise, if $A=[0,l-1]\cup[l+1,l+k]$ with
$k=1$, hence $A=[0,l-1]\cup \{l+1\}$ which implies that
$|2\hspace{.15cm}\widehat{}A|=2|A|-2$. For $k\geq 2$, since
$2\hspace{.15cm}\widehat{}\hspace{.15cm}[0,l-1]\cup \left
([0,l-1]+[l+1,l+k]\right ) $ is an interval, we have that}
\begin{equation}\label{}
2\hspace{.15cm}\widehat{}A\supseteq[1,2l+2k-1].
\end{equation}
\textit{Therefore, $|2\hspace{.15cm}\widehat{}A|\geq 2|A|-1$. A
similar argument can be used for part $b)$ of the previous theorem.
For part $c)$,  since $w(l+s-2,a_{\min})\geq 1$ and
$w(a_{\max},p-k+s+1)\geq 1$, we have that $A$ cannot be of the form
$[0,l-1]\cup[l+1,l+k]$ for $k\geq 1$. We conclude that under the
conditions of the previous theorem it is always true that
$|2\hspace{.15cm}\widehat{}A|\geq 2|A|-1$.}
\end{remark}
Suppose that $2\hspace{.15cm}\widehat{} A$ is an interval. Since
$0,1\in A$, we can suppose that it is of the form $[d,p-1]\cup[0,c]$
with $c,d\in\Z_p$. We partition $B$ into two sets $B_1$ and $B_2$ as
follows:
\begin{equation}\label{B_1}
    B_1=\left\{a\in B:(a\;\widehat {+}\;[0,l-1])\subset [0,c]\right\},
\end{equation}
and
\begin{equation}\label{B_2}
B_2=\left\{a\in B:(a\;\widehat {+}\;[0,l-1])\subset
[d,p-1]\cup[0,2l-3]\right\},
\end{equation}
with $|B_1|=s$ and $|B_2|=k-s$.

\begin{theorem}\label{t2}
Let $A \subseteq \Z_{p}$ such as in \eqref{b} with $\left| A\right|
=m=l+k\geq 4$ and $A$ is not an arithmetic progression . If
$2\hspace{0.15cm}\widehat{} A $ is an interval with $
|2\hspace{0.15cm}\widehat{} A|<p-1 $, then
\begin{itemize}
\item [a)] If $2\hspace{0.15cm}\widehat{} A $ is of the form $[1,c]$ and $a_{\min}=\min\{a\in B:l+k\leq a\}$, then
\begin{equation}\label{e'}
|2\hspace{0.15cm}\widehat{} A|\geq 2\left\vert A\right\vert-3
+w(l+k-2,a_{\min}).
\end{equation}
\item [b)] If $2\hspace{0.15cm}\widehat{} A $ is of the form $[d,p]\cup[1,2l-3]$ and $a_{\max}=\max\{a\in B: a\leq p-k-1\}$, then
\begin{equation}\label{f'}
|2\hspace{0.15cm}\widehat{} A|\geq 2\left\vert A\right\vert-3
+w(a_{\max},p-k+2).
\end{equation}
\item [c)] If $2\hspace{0.15cm}\widehat{}A$ is of the form $[d,p]\cup[1,c]$, $a_{\min}=\min\{a\in B_1:l+s\leq a\}$ and $a_{\max}=\max\{a\in B_2:a\leq p-k+s-1\}$ with $B_1,B_2$ as in \eqref{B_1},\eqref{B_2}; respectively, then
\begin{equation}\label{g'}
|2\hspace{.15cm}\widehat{} A|\geq 2\left\vert A\right\vert
-3+w\left(l+s-2,a_{\min}\right)+w\left(a_{\max},p-k+s+1\right).
\end{equation}
\end{itemize}
\end{theorem}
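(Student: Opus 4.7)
The plan is to reduce Theorem \ref{t2} to Theorem \ref{T2} by reusing the constructions and arithmetic of the earlier proof, the only adaptation being that the inclusion $2\hspace{.15cm}\widehat{} A_j\subseteq 2\hspace{.15cm}\widehat{} A$ is now justified by the interval structure of $2\hspace{.15cm}\widehat{} A$ itself rather than by the stronger hypothesis of Theorem \ref{T2} that $2\hspace{.15cm}\widehat{}\,[0,l-1]\cup\bigl([0,l-1]+B\bigr)$ is already an interval. Since the bounds \eqref{e'}, \eqref{f'} and \eqref{g'} have exactly the same shape as those of Theorem \ref{T2}, the explicit size computations need not be redone.

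For part (a), I would first observe that $a_k+l-1\in [0,l-1]+B\subseteq 2\hspace{.15cm}\widehat{} A=[1,c]$, so $c\geq a_k+l-1$ and $[1,a_k+l-1]\subseteq 2\hspace{.15cm}\widehat{} A$. I would then construct the sequence $A_1,A_2,\dots$ exactly as in the proof of Theorem \ref{T2}(a), terminating at one of the three explicit forms enumerated there. Inspecting each of those forms, and using that $a_{\min}+(l-1)\in 2\hspace{.15cm}\widehat{} A$ and hence $a_{\min}+(l-1)\leq c$, one checks directly that $2\hspace{.15cm}\widehat{} A_j\subseteq [1,c]=2\hspace{.15cm}\widehat{} A$; the explicit size computations of Theorem \ref{T2}(a) then deliver \eqref{e'} without change.

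Part (b) is obtained from part (a) via the involution $A\mapsto -A+\{l-1\}$, exactly as in Theorem \ref{T2}(b), so no new work is required.

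Part (c) is where the main technical obstacle lies. With $2\hspace{.15cm}\widehat{} A=[d,p-1]\cup[0,c]$ wrapping around $\Z_p$, I would split $B=B_1\cup B_2$ according to \eqref{B_1} and \eqref{B_2}, apply part (a) to $\Lambda^{(1)}=[0,l-1]\cup B_1$ and part (b) to $\Lambda^{(2)}=[0,l-1]\cup B_2$, and then glue the two reductions into a set $A'$ of one of the nine forms already enumerated in the proof of Theorem \ref{T2}(c). For each of those sub-cases, both the evaluation of $|2\hspace{.15cm}\widehat{} A'|$ and the crucial containment $2\hspace{.15cm}\widehat{} A'\subseteq 2\hspace{.15cm}\widehat{} A$ mirror the corresponding sub-case there; the delicate point, which I expect to be the main verification, is that $a_{\min}$ and $a_{\max}$ lie on opposite sides of the wrap-around $[d,p-1]\cup[0,c]$, which is precisely what allows the sums appearing in $2\hspace{.15cm}\widehat{} A'$ to fit inside $2\hspace{.15cm}\widehat{} A$ and therefore yields \eqref{g'}.
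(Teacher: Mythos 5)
Your proposal is correct and follows essentially the same route as the paper: the paper's own proof simply observes that $2\hspace{.15cm}\widehat{}\,[0,l-1]\cup([0,l-1]\,\widehat{+}\,B)$ is contained in the interval $2\hspace{.15cm}\widehat{}A$, deduces parts (a) and (b) as immediate consequences of Theorem \ref{T2}, and for part (c) applies the same recursive procedure to $A$ with $B_1,B_2$ as in \eqref{B_1} and \eqref{B_2}. Your write-up in fact supplies more justification than the paper does for why the containments $2\hspace{.15cm}\widehat{}A_j\subseteq 2\hspace{.15cm}\widehat{}A$ survive under the weaker hypothesis, but the underlying argument is identical.
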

\begin{proof}
Observe that if  $2\hspace{.15cm}\widehat{} A$ is an interval, then it does not imply that $2\hspace{.15cm}\widehat{}\; [0,l-1]\cup \left ([0,l-1]\widehat{+} B\right )$ is an interval. However, $2\hspace{.15cm}\widehat{}\;[0,l-1]\cup \left ([0,l-1]\widehat{+} B\right )\subset 2\hspace{.15cm}\widehat{} A$ (properly contained). In cases $ a) $ and $ b) $ of the Theorem, the results are immediate consequence of Theorem \ref{T2}. For case $ c) $, apply the recursive procedure of Theorem \ref{T2} to $ A $, $B_1$ and $B_2$ as defined in \eqref{B_1} and \eqref{B_2}.\\
\end{proof}
\begin{remark}
\textit{Suppose that the conditions of Theorem \ref{t2} are valid.
If we apply the same arguments of Remark \ref{remark1} to Theorem
\ref{t2}, then we can conclude that  it is always true that
$|2\hspace{.15cm}\widehat{}A|\geq 2|A|-1$.}
\end{remark}
Let $A\subseteq\Z_p$ not an arithmetic progression. Define
$$A=\bigcup_{m=1}^{r}I_m,\qquad r\geq 2$$
and $I_m$ are disjoint intervals ($m=1,\dots,r$). Let
$H:A\rightarrow \N_0$ defined by $H(A)=r-1$.

Now suppose that $2\hspace{0.15cm}\widehat{}\;[0,l-1]\cup
2\hspace{0.15cm}\widehat{}\;B$ is an interval and
$2\hspace{0.15cm}\widehat{}A$ is not. We partition $B$ into three
sets as follows:
\begin{equation}\label{r}
  \begin{split}
  B=\{a_1,a_2,...,a_{n-1}\}\cup\{a_n,a_{n+1},...,a_N\}\cup\{a_{N+1},...,a_{k}\}=B_1\cup B_2\cup B_3,
  \end{split}
  \end{equation}
where
\begin{equation}\label{s}
    a_n=\min\left\{a\in B:(a\widehat{+}[0,l-1])\cap(2\hspace{0.15cm}\widehat{}\;[0,l-1]\cup 2\hspace{0.15cm}\widehat{}\,B)=\emptyset\right\},
\end{equation}
and
\begin{equation}\label{t}
    a_N=\max\left\{a\in B:(a\widehat{+}[0,l-1])\cap(2\hspace{0.15cm}\widehat{}\;[0,l-1]\cup 2\hspace{0.15cm}\widehat{}\, B)=\emptyset\right\}.
\end{equation}
The elements $a_n  $ and $ a_N $ are well defined elements of $ B $
since the set $2\hspace{0.15cm}\widehat{}A$ is not an interval.
\begin{theorem}\label{t3}
Let $A=[0,l-1]\cup B \subseteq \Z_{p}$ with $\left| A\right|
=m=l+k\geq 4$. Suppose that $A$ is not an arithmetic progression. If
$2\hspace{0.15cm}\widehat{}\;[0,l-1]\cup
2\hspace{0.15cm}\widehat{}\,B$ is an interval and
$2\hspace{0.15cm}\widehat{} A$ is not an interval, then
\begin{equation}\label{u}
\left\vert 2\;\widehat{} A\right\vert \geq 2\left\vert A\right\vert
-2+H(B_2).
\end{equation}
\end{theorem}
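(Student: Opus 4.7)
The starting point is the decomposition \eqref{d}, which in our setting simplifies to
\begin{equation*}
2\,\widehat{}\,A \;=\; I \;\cup\; \bigl([0,l-1]+B\bigr),\qquad I:=2\,\widehat{}\,[0,l-1]\cup 2\,\widehat{}\,B,
\end{equation*}
where $I$ is an interval by hypothesis and $[0,l-1]\,\widehat{+}\,B=[0,l-1]+B$ because $B\cap[0,l-1]=\emptyset$. Using the partition $B=B_1\cup B_2\cup B_3$ from \eqref{r}, the minimality of $a_n$ in \eqref{s} and the maximality of $a_N$ in \eqref{t} force $(a+[0,l-1])\cap I\neq\emptyset$ for every $a\in B_1\cup B_3$. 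Absorbing these translates into $I$, the set
\begin{equation*}
I' \;:=\; I\;\cup\;\bigcup_{a\in B_1\cup B_3}\bigl(a+[0,l-1]\bigr)
\end{equation*}
is still a (cyclic) interval, and $2\,\widehat{}\,A = I' \cup ([0,l-1]+B_2)$.

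My plan is then to estimate both pieces. For $|I'|$, observe that $2\,\widehat{}\,([0,l-1]\cup B_1\cup B_3)\subseteq I'$, so the Erd\H{o}s--Heilbronn inequality applied to the reduced set of cardinality $|A|-|B_2|$ yields $|I'|\geq 2(|A|-|B_2|)-3$. I would then sharpen this by one unit by exploiting that $I'$ is in fact a single interval extending on both sides of $2\,\widehat{}\,[0,l-1]=[1,2l-3]$, reaching $|I'|\geq 2(|A|-|B_2|)-2$; this step mimics the arguments already carried out in Theorem~\ref{T2} parts a) and b), where an interval sum is forced to occupy both ends of $I'$.

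For the contribution of $B_2$, decompose $B_2 = J_1\cup\cdots\cup J_r$ into its $r=H(B_2)+1$ maximal intervals. Each translate $[0,l-1]+J_i$ is an interval of length $|J_i|+l-1$, and by the defining property of $a_n,a_N$ the outermost components $J_1$ and $J_r$ carry translates disjoint from $I$. A disjointness argument should then show that $([0,l-1]+B_2)\setminus I'$ supplies at least $2|B_2|+H(B_2)$ fresh elements, the two extremal chunks accounting for $2|B_2|$ and each internal gap of $B_2$ producing one further disjoint component. Summing the two bounds gives
\begin{equation*}
|2\,\widehat{}\,A|\;\geq\; 2(|A|-|B_2|)-2+2|B_2|+H(B_2)\;=\;2|A|-2+H(B_2).
\end{equation*}

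The main obstacle is the disjointness bookkeeping in the last step: one must guarantee that the translates $[0,l-1]+J_i$ neither merge among themselves nor leak back into the large interval $I'$, because either phenomenon would consume the $H(B_2)$ bonus. I expect this to require a careful case analysis in the spirit of Theorem~\ref{T2}, together with the standing assumption $2\,\widehat{}\,A\neq\mathbb{Z}_p$ to preclude wrap-around collisions in $\mathbb{Z}_p$.
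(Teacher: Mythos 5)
Your decomposition $2\,\widehat{}\,A=I'\cup([0,l-1]+B_2)$ is a genuinely different route from the paper's, but as it stands both of your quantitative claims have real gaps, and at least one is false in a case the theorem must cover. First, the sharpening $|I'|\geq 2(|A|-|B_2|)-2$: when $B_1=B_3=\emptyset$ the reduced set $[0,l-1]\cup B_1\cup B_3=[0,l-1]$ \emph{is} an arithmetic progression, and if moreover $2\,\widehat{}\,B\subseteq[1,2l-3]$ (which happens, e.g., in the configuration $B_2=[\frac{p+1}{2},\dots]$ discussed in the remark following the theorem) then $I'=I=[1,2l-3]$ has exactly $2(|A|-|B_2|)-3$ elements; there is no ``extension on both sides'' to exploit, so the extra unit is simply not there. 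Second, the claim that the two extremal chunks of $[0,l-1]+B_2$ contribute $2|B_2|$ fresh elements is unsubstantiated: when $r=1$ the single chunk has only $|B_2|+l-1$ elements, which is $\geq 2|B_2|$ only if $|B_2|\leq l-1$, a bound you have not established (and even the extremal translates $a_n+[0,l-1]$, $a_N+[0,l-1]$ are only guaranteed disjoint from $I$, not from the enlarged $I'$, nor from each other's neighbours in $B_1\cup B_3$). Since each of the two summands can fall short by one, the argument needs a compensation mechanism between them, and that is precisely the ``bookkeeping'' you defer; so the proposal does not yet constitute a proof.

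The paper avoids all of this by a compression argument: replace $B_2$ by the interval $B_2'=[a_n,a_n+N-n]$ of the same cardinality and set $A'=[0,l-1]\cup B_1\cup B_2'\cup B_3$. Because $B_2'\,\widehat{+}\,B$ still lands inside the interval $I$, one gets $|2\,\widehat{}\,A'|-|B_2'+[0,l-1]|\leq |2\,\widehat{}\,A|-|B_2+[0,l-1]|$; then the known inverse bound $|2\,\widehat{}\,A'|\geq 2|A'|-2$ for sets that are not arithmetic progressions (K\'arolyi) handles the bulk, and the elementary count $|B_2+[0,l-1]|-|B_2'+[0,l-1]|\geq H(B_2)$ (each of the $H(B_2)$ gaps of $B_2$ costs at least one element after adding $[0,l-1]$) supplies the bonus term. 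If you want to salvage your direct approach, you would essentially have to reprove that inverse bound by hand inside your case analysis; invoking it via compression is the shorter path.
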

\begin{proof}
We define $B=B_1\cup B_2\cup B_3$ as in \eqref{r} and $a_n,a_N$ as
in \eqref{s} and \eqref{t}, respectively. Consider
\begin{equation}\label{v}
\begin{split}
2\hspace{0.15cm}\widehat{} A=2\hspace{0.15cm}\widehat{}\;[0,l-1]\cup
2\hspace{0.15cm}\widehat{}\, B\cup (B_1\widehat{+}[0,l-1])\cup
&(B_2\widehat{+}[0,l-1])\cup (B_3\widehat{+}[0,l-1]).
\end{split}
\end{equation}
It is easy to see that $B_2\widehat{+} B$ is a subset of the
interval $2\;\widehat{}\;[0,l-1]\cup 2\;\widehat{}\;B$, then
\begin{equation}\label{v1}
\begin{split}
B_2\widehat{+} B\subseteq[a_n,a_N]\;\widehat{+}\, B\subseteq
2\hspace{0.15cm}\widehat{}\;[0,l-1]\cup
2\hspace{0.15cm}\widehat{}\,B.
\end{split}
\end{equation}
We define $B'_2=[a_n,a_n+N-n]$. Observe that $|B_2'|=|B_2|=N-n+1$.
From \eqref{v1} and the fact that $a_n+N-n\leq a_{N}$, we obtain
\begin{equation}\label{w}
\begin{split}
B'_2\;\widehat{+}\, B\subseteq 2\;\widehat{}\;[0,l-1]\cup
2\;\widehat{}\;B.
\end{split}
\end{equation}
An important fact is that $2\;\widehat{}\;B'_2\subseteq
2\;\widehat{}\;[0,l-1]\cup 2\;\widehat{}\;B$. We define
\begin{equation}\notag
\begin{split}
A'=[0,l-1]\cup B'
\end{split}
\end{equation}
with $B'= B_1\cup B'_2\cup B_3.$ Observe that $2\;\widehat{}\;B'\subseteq2\;\widehat{}\;[0,l-1]\cup2\;\widehat{}\;B$. 
Hence
\begin{equation}\label{x}
\begin{split}
    2\hspace{.1cm}\widehat{} A'&=2\;\widehat{}\;[0,l-1]\cup 2\;\widehat{}\,B' \cup (B_1\widehat{+}[0,l-1])\cup (B_2'\widehat{+}[0,l-1])\cup (B_3\widehat{+}[0,l-1])\\
&\subseteq 2\;\widehat{}\;[0,l-1]\cup 2\;\widehat{}\,B \cup
(B_1\widehat{+}[0,l-1])\cup (B_2'\widehat{+}[0,l-1])\cup
(B_3\widehat{+}[0,l-1])
\end{split}
\end{equation}
Using \eqref{v},\eqref{w}, and \eqref{x}, we obtain that
\begin{equation}\notag
\begin{split}
2\; \widehat {} A'\; \backslash \left ( B'_2\widehat{+}[0,l-1]\right
)\subseteq 2\; \widehat {} A\;\backslash \left (
B_2\widehat{+}[0,l-1]\right )
\end{split}
\end{equation}
and
\begin{equation}\label{y}
\left|2\; \widehat {} A'\; \backslash \left (
B'_2\widehat{+}[0,l-1]\right )\right|\leq \left|2\; \widehat {}
A\;\backslash \left ( B_2\widehat{+}[0,l-1]\right )\right|.
\end{equation}
Since $B_2\widehat{+}[0,l-1]$ and $B'_2\widehat{+}[0,l-1]$ are
subset of $2\; \widehat {} A$ and $2\; \widehat {} A'$;
respectively, then
\begin{equation}\label{z}
\begin{split}
|2\; \widehat {} A'\; \backslash  \left (
B'_2\widehat{+}[0,l-1]\right )|= \left|2\; \widehat {}
A'\;\right|-\left| \left ( B'_2\widehat{+}[0,l-1]\right )\right|,
\end{split}
\end{equation}
and
\begin{equation}\label{a1}
\begin{split}
\left|2\; \widehat {} A\;\backslash \left (
B_2\widehat{+}[0,l-1]\right )\right|=\left|2\; \widehat {}
A\;\right|-\left| \left ( B_2\widehat{+}[0,l-1]\right )\right|.
\end{split}
\end{equation}
From \eqref{y}, \eqref{z} and \eqref{a1} it follows that
\begin{equation}\label{b1}
\left|2\; \widehat {}
A'\;\right|+\left(\left|B_2+[0,l-1]\right|-\left|B'_2+[0,l-1]\right|\right)\leq
\left|2\; \widehat {} A\;\right|.
\end{equation}
By hypothesis $A'$ is not an arithmetic progression, then
$|2\;\widehat{} A'|\geq 2|A'|-2$, see \cite{Karolyi}. Therefore
\begin{equation}\label{c1}
|2\;\widehat{} A'|\geq 2|A|-2,
\end{equation}
since $|A|=|A'|$. On the other hand, it is not difficult to see that
\begin{equation}\label{d1}
    \left|B_2+[0,l-1]\right|-\left|B'_2+[0,l-1]\right|\geq H(B_2).
\end{equation}
From \eqref{b1},\eqref{c1} and \eqref{d1} we conclude that
\begin{equation}\notag\label{e1}
\left|2\; \widehat {} A\;\right|\geq    2|A|-2+ H(B_2),
\end{equation}
\end{proof}
\begin{remark}
\textit{Under the conditions of Theorem \ref{t3}, it is easy to
verify that $A$ can never be of the form $[0,l-1]\cup[l+1,l+k]$. If
$H(B_2)=0$, then $B_2$ is an interval of the form $[a_n,a_n+N-n]$
with $|B_2|=N-n+1\leq l$. Suppose that $B_1=B_3=\emptyset$. We will
consider  some cases related with the structure of $B_2$. }
\begin{itemize}
\item \textit{If $B_2=\left [\frac{p+1}{2},\frac{p+1}{2}+\left \lfloor\frac{ l+1}{2}\right \rfloor+i\right ],$ $i=0,1,2,...$, then $[0,l-1]$ would not be the longest arithmetic progression.  The longest arithmetic progression would be the one that begins at $a=0$ and have difference $d=\frac{p+1}{2}$.}
\item \textit{If $B_2=\left [\frac{p+1}{2},\frac{p+1}{2}+\left \lfloor\frac{ l+1}{2}\right \rfloor-1\right ],$ then $|B_2|=\left \lfloor\frac{ l+1}{2}\right \rfloor$ and}
$$2\; \widehat {} A=[1,2l-3]\cup \left [\frac{p+1}{2},\frac{p+1}{2}+\left \lfloor\frac{ l+1}{2}\right \rfloor+l-2\right ].$$
\textit{On the other hand, it is true that}
$$\left |\left [\frac{p+1}{2},\frac{p+1}{2}+\left \lfloor\frac{ l+1}{2}\right \rfloor+l-2\right ]\right |=\left \lfloor\frac{ l+1}{2}\right  \rfloor+l-1.$$
\textit{It is not difficult to see that }
$$2\left \lfloor\frac{ l+1}{2}\right  \rfloor+\left \lfloor\frac{l}{2}\right\rfloor-1= \left \lfloor\frac{ l+1}{2}\right  \rfloor+l-1.$$

\textit{It follows that}
$$|2\; \widehat {} A|=2|A|-3+\left \lfloor \dfrac{l}{2}\right \rfloor-1.$$
\textit{Observe that if $l=2,3,4,5$, then the hypothesis of theorem
are not valid. Even so, suppose that $l=4$. We have that
$A=[0,3]\cup [12,13]$ and this set can be transformed to
$A'=2*A=[0,4]\cup \{6\}$ and so $|2\; \widehat {} A|=|2\; \widehat
{} A'|=2|A'|-2=10$. The case $l=5$ can be handled in a similar way.
If $l\geq 6$, then we get that $|2\; \widehat {} A|\geq 2|A|-1$.}
\end{itemize}
\end{remark}

\subsection*{Acknowledgements}
We thank Prof. Luis Verde--Star for the motivation during the writing of paper. 


\end{document}